\documentclass[11pt]{amsart}
\usepackage[T1]{fontenc}
\usepackage{amsmath,amssymb,amsthm}
\usepackage{mathtools}
\usepackage{hyperref}
\usepackage[nameinlink]{cleveref}
\usepackage{autonum}
\usepackage{tikz}
\usetikzlibrary{positioning}
\usepackage{booktabs}
\usepackage{longtable}
\usepackage{float}
\usepackage{listings}
\usepackage{xcolor}
\usepackage{caption}
\usepackage{fvextra}
\usepackage{fancyvrb}

\DefineVerbatimEnvironment{Verbatim}{Verbatim}{
  breaklines=true,
  breakanywhere=true
}
 \allowdisplaybreaks
\newtheorem{thm}{Theorem}[section]

\newtheorem{prop}{Proposition}[section]
\newtheorem{cor}{Corollary}[section]
\newtheorem{eg}{Example}[section]
\newtheorem{definition}{Definition}[section]

\newtheorem{rem}{Remark}[section]

\newtheorem{prob}{Problem}[section]

\crefname{thm}{Theorem}{Theorem}
\crefname{lem}{Lemma}{Lemma}
\crefname{prop}{Proposition}{Proposition}
\crefname{cor}{Corollary}{Corollary}
\crefname{eg}{Example}{Example}
\crefname{definition}{Definition}{Definition}
\crefname{prob}{Problem}{Problem}

\def\fr#1{\mathfrak{#1}}
\def\bb#1{\mathbb{#1}}

\def\ali#1{\begin{align}#1\end{align}}

\def\w{\wedge}
\def\bw{\textstyle{\bigwedge}}
\def\o{\oplus}
\def\bo{\bigoplus}
\def\ang#1{\langle #1\rangle}
\def\dim#1{{\rm dim\ }#1}
\def\0{\{ 0\}}
\def\Ker#1{{\rm Ker}\ #1}
\def\Im#1{{\rm Im}\ #1}

\title[mixed Hodge structures on $\fr{n}_k(\bb{R})$]{mixed Hodge structures on the strictly upper triangular Lie algebras}
\author{Taito Shimoji}
\date{}
\address{Department of Mathematics, Graduate School of Science, The University of Osaka, Osaka, Japan}
\email{u215629i@ecs.osaka-u.ac.jp}
\begin{document}
\maketitle
\begin{abstract}
We show that, for $k\geq 3$, a lattice in the unitriangular Lie group
$U_k(\bb{R})$ is the fundamental group of a smooth quasi-projective
complex algebraic variety if and only if $k=3$.
The proof uses the conjugation symmetry and weight conditions of mixed Hodge structure on the Malcev Lie algebra proven by Morgan.
This result identifies a family of groups that must be excluded
in the search for smooth quasi-projective varieties with torsion-free
nilpotent fundamental groups of high nilpotency class.
\end{abstract}
\section{Introduction}
In this paper, we consider characterization problem for lattices in Lie groups to be the fundamental groups of smooth quasi-projective varieties. In particular, we focus on the case such that the Lie grups are simply-connected and nilpotent. Here, a \emph{lattice} in a simpy-connected Lie group means a discrete-cocompact subgroup.
\subsection{Main Result}
For $k\geq 3$, let $U_k(\bb{R})$ denote the Lie group defined by
\[
U_k(\bb{R}):=\{A=(a_{ij})\in GL_k(\bb{R})\mid a_{ii}=1,a_{ij}=0\text{ if }i>j\}.
\]
Note that $U_3(\bb R)$ is the $3$-dimensional Heisenberg Lie group $H_3(\bb R)$. The main result is as follows. The following result gives the characterization for lattices in $U_k(\bb R)$ to be the fundamental groups of smooth quasi-projective varieties.
\begin{thm}[\cref{main}]
Let $\Gamma$ be a lattice in $U_k(\bb{R})$ for $k\geq 3$. $\Gamma$ is the fundamental group of a smooth quasi-projective variety if and only if $k=3$.
\end{thm}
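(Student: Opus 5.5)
The case $k=3$ is a construction; the cases $k\geq 4$ are an obstruction from Morgan's theory.

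\textbf{The case $k=3$.} Every lattice in $H_3(\bb R)=U_3(\bb R)$ is, up to isomorphism, one of
\[
\Gamma_n=\langle x,y,z\mid [x,y]=z^n,\ z \text{ central}\rangle, \qquad n\geq 1.
\]
Let $E$ be an elliptic curve and $L\to E$ a holomorphic line bundle of degree $n$. Take $X$ to be the complement of the zero section in the total space of $L$. Then $X$ is a $\bb C^*$-bundle over $E$ with Euler class $n$. The homotopy exact sequence of this fibration exhibits $\pi_1(X)$ as a central extension of $\bb Z^2$ by $\bb Z$ with class $n$, that is, $\pi_1(X)\cong\Gamma_n$. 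Finally, $X$ is smooth and quasi-projective, since it is an open subset of the projective bundle $\bb P(L\oplus\mathcal O)$.

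\textbf{The case $k\geq 4$.} Suppose $\Gamma=\pi_1(X)$ with $X$ smooth quasi-projective. Then the Malcev Lie algebra $\fr n=\fr n_k(\bb R)$ carries a functorial mixed Hodge structure by Morgan. Its weight filtration $W$ satisfies the following:
\begin{itemize}
\item $H_1(\fr n)=\fr n/[\fr n,\fr n]$ has weights $-1$ and $-2$ only;
\item the bracket is a morphism of MHS;
\item $\mathrm{Gr}^W$ is a graded Lie algebra isomorphic to $\fr n$, via the Deligne splitting.
\end{itemize}
Hence $\fr n$ admits a grading $\fr n=\bigoplus_{m\geq1}\fr n_{-m}$ by negative weights. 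This grading is generated in degrees $-1$ and $-2$, and each $\fr n_{-m}\otimes\bb C$ carries a pure Hodge structure, so $\fr n_{-m}\otimes\bb C=\bigoplus_{p+q=-m}V^{p,q}$ with $\overline{V^{p,q}}=V^{q,p}$.

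In particular, the weight $-1$ part $\fr n_{-1}$ has even dimension, and $V^{-1,0}$ and $V^{0,-1}$ are conjugate. The bracket $\bw^2\fr n_{-1}\to\fr n_{-2}$ is also a morphism of Hodge structures, so $[V^{-1,0},V^{-1,0}]\subset V^{-2,0}$, and so on.

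The plan is to analyze $\fr n_k$ concretely. It has basis $E_{ij}$ for $i<j$, and $H_1$ is spanned by the $k-1$ elements $E_{i,i+1}$. Any grading by weights is obtained from a semisimple derivation. Derivations of $\fr n_k$ are known: modulo inner derivations they are diagonal torus derivations together with a few extra ones at the corners. Using this, a grading generated in degrees $1$ and $2$ assigns a weight $w_i\in\{1,2\}$ to each simple root vector, after conjugating by an automorphism. I would also need to handle automorphisms mixing the root vectors, but I will assume Morgan's structure lets us reduce to diagonal derivations.

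\textbf{Main obstacle.} The hard step is to derive a contradiction from the Hodge symmetry using the chain structure of $\fr n_k$. The centralizer structure is very rigid: $E_{i,i+1}$ brackets nontrivially only with its neighbours $E_{i-1,i}$ and $E_{i+1,i+2}$. The weight $-1$ subspace $\fr n_{-1}$ must split as $V^{-1,0}\oplus\overline{V^{-1,0}}$. Its image under the bracket, intersected with weight $-2$, must be a sub-Hodge structure of type $(-2,0)+(-1,-1)+(0,-2)$.

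For $k\geq4$ I would look at the centre, which is the top weight piece, spanned by $E_{1k}$. This piece is one-dimensional, so it must have type $(-m/2,-m/2)$; in particular $m$ is even. Since it is spanned by $E_{1k}$, its weight is $m=\sum_i w_i$. I would combine this with a parity count on the lower central series quotients. I would also track where $V^{-1,0}$ can sit. A maximal isotropic complex subspace $V^{-1,0}$ of $\fr n_{-1}\otimes\bb C$ must satisfy compatibility with the bracket into the one-dimensional pieces $\fr n^{(j)}/\fr n^{(j+1)}$ that live at the ends of the chain. I expect this to force the chain to have length $2$, i.e.\ $k=3$, but pinning down the exact incompatibility is the step I expect to require the most work.
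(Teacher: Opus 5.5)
Your $k=3$ argument is fine; it is the $\bb C^*$-bundle construction that the paper only cites.

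\textbf{The gap: the case $k\geq 4$.} You give no argument there, and the route you sketch cannot produce one. The only constraints you use are that $H_1(\fr n)$ has weights $-1,-2$, that the bracket is a morphism of mixed Hodge structures, and Hodge symmetry on each $\mathrm{Gr}^W$. These are satisfied by $\fr n_k$ for \emph{every} $k$. Give each $E_{i,i+1}$ type $(-1,-1)$, so that $E_{ij}$ has type $(-(j-i),-(j-i))$. This is a split $\bb Q$-mixed Hodge structure compatible with the bracket, with $H^1=H^1_{1,1}$. Its weight $-1$ part is zero, and the centre $\bb C E_{1k}$ has type $(-(k-1),-(k-1))$. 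So neither the parity of the central weight nor the position of $V^{-1,0}$ can give an obstruction. A secondary issue: diagonalizing by root vectors can at best handle the real weight grading. The root vectors are real, so a nonzero $V^{-1,0}$ is never spanned by them, and conjugation symmetry is exactly what a diagonal picture loses (compare \cref{bigradOK}).

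\textbf{The missing ingredient.} You need the second half of condition \textbf{(W)} in \cref{Morgan}: $H^2(\fr n_{\bb C})$ has only types $(2,0),(1,1),(0,2),(2,1),(1,2),(2,2)$. Equivalently, the relations of the Malcev Lie algebra have weights $-2,-3,-4$. This must be combined with Kostant's basis of $H^2(\fr n_k)$ from \cref{Kostant}.

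In the example above, the classes $[x_i\w y_i]$ with $dy_i=x_i\w x_{i+1}$ have type $(3,3)$, which is exactly what is forbidden. In your diagonal-weight language, the condition gives $2w_i+w_{i+1}\le 4$ and $w_i+2w_{i+1}\le 4$, so every $w_i=1$. You would still need the type restriction (no classes of type $(3,0)$ or $(0,3)$) to cut $H^1_{1,0}$ down to dimension one.

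\textbf{How the paper uses it.} The paper works on cochains:
\begin{itemize}
\item It writes $x_i=x_i^{1,0}+x_i^{0,1}+x_i^{1,1}$ and splits $y_i$ into bigraded components.
\item It notes that forms such as $x_i^{1,0}\w y_i^{2,0}$ and $x_i^{1,1}\w y_i^{2,2}$ are closed of forbidden type, hence exact by \textbf{(W)}.
\item It then applies \cref{useful} to get $y_i^{2,0}=y_i^{0,2}=y_i^{2,2}=y_i^{2,1}=y_i^{1,2}=0$.
\item Consequently $x_i^{1,0}\w x_{i+1}^{1,0}=0$ and $x_i^{1,1}\w x_{i+1}^{1,1}=0$. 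This forces $\dim H^1_{1,0}=\dim H^1_{0,1}=1$ and $\dim H^1_{1,1}\le 1$, so $k\le 4$.
\item The case $k=4$ is excluded separately using \cref{useful}.
\end{itemize}
Without some use of the weight and type restriction on $H^2$, your plan has no mechanism for bounding $k$.
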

It is known that there exists a smooth quasi-projective variety such that the fundamnetal group is isomorphic to any lattice in $U_3(\bb R)$. Thus ,to prove the main result, it is enough to show that
$\fr{n}_k(\bb{R})$ satisfies the conditions in the following
theorem of Morgan only in the case $k=3$.
\begin{thm}\cite[Theorem {\bf (9.4)}]{Mor78}\label{Morganintro}
Let $M$ be a smooth quasi-projective variety. Let $G$ be a simply-connected real nilpotent Lie group. Assume that $\pi_1(M, x)$ is a lattice in $G$. For the Lie algebra $\fr{g}$ of $G$,  there exists a mixed Hodge structure on $(\fr{g},W,F)$ such that for the induced bigrading on the cohomology $H^*(\fr{g}_{\bb{C}})=\bo_{p, q\geq 0, p+q\geq 1}H_{p, q}^*(\fr{g}_{\bb{C}})$ of the complexification $\fr{g}_{\bb{C}}=\fr{g}\otimes\bb{C}$, we have 
\ali{
&{\bf (W)}:
\begin{array}{l}
H^1(\fr{g}_{\bb{C}})
 = H_{1,0}^1(\fr{g}_{\bb{C}}) \o H_{0,1}^1(\fr{g}_{\bb{C}}) \o H_{1,1}^1(\fr{g}_{\bb{C}}) \text{ and }\\
H^2(\fr{g}_{\bb{C}})
 = H_{2,0}^2(\fr{g}_{\bb{C}}) \o H_{1,1}^2(\fr{g}_{\bb{C}}) \o H_{0,2}^2(\fr{g}_{\bb{C}})
 \o H_{2,1}^2(\fr{g}_{\bb{C}}) \o H_{1,2}^2(\fr{g}_{\bb{C}}) \o H_{2,2}^2(\fr{g}_{\bb{C}}).
\end{array}
}
\end{thm}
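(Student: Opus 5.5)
The plan is to deduce the statement from Morgan's mixed Hodge theory on Sullivan minimal models, Deligne's weight bounds, and the Malcev/Nomizu correspondence for nilpotent lattices.

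\emph{Step 1: identify $\fr{g}$ with the Malcev Lie algebra of $\pi_1(M,x)$.} Put $\Gamma=\pi_1(M,x)$. Since $\Gamma$ is a lattice in the simply-connected nilpotent group $G$, Malcev theory gives that the real Malcev completion of $\Gamma$ is $G$. Its Lie algebra is therefore $\fr{g}$. Dually, the Chevalley--Eilenberg complex $(\bw^*\fr{g}^*,d)$ is a real model of the $1$-minimal model of $\Gamma$. By Nomizu's theorem,
\[
H^*(\fr{g})\cong H^*(G/\Gamma;\bb{R})\cong H^*(\Gamma;\bb{R}).
\]
Moreover, the $1$-minimal model $\mathcal{M}_1(M)$ of $M$ is the $1$-minimal model of $K(\Gamma,1)$. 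Hence $\bw^*\fr{g}^*\cong \mathcal{M}_1(M)\otimes\bb{R}$.

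\emph{Step 2: put a mixed Hodge structure on $\fr{g}$.} Fix a smooth compactification $\overline{M}$ with normal crossing boundary. Morgan builds a mixed Hodge diagram computing $H^*(M)$ from the logarithmic de Rham complex with its weight and Hodge filtrations, and the rational Sullivan forms. He then shows that the minimal model, and in particular $\mathcal{M}_1(M)$, carries filtrations $W,F$ such that each graded piece is a mixed Hodge structure and the differential is a morphism of mixed Hodge structures. Dualizing the degree-one generators gives the mixed Hodge structure $(\fr{g},W,F)$. The bracket is compatible with it, and so is the induced bigrading $H^*_{p,q}$ on $H^*(\fr{g}_{\bb{C}})$ coming from Deligne's splitting. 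Signs are normalized so that the cohomology weights are non-negative, giving $p,q\ge 0$ and $p+q\ge 1$.

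\emph{Step 3: the weight condition (W).} The map $\mathcal{M}_1(M)\to A^*(M)$ induces an isomorphism on $H^1$ and an injection on $H^2$, since the $1$-minimal model is $1$-connected to $M$. This map is a morphism of mixed Hodge structures, and morphisms of mixed Hodge structures are strict. Consequently:
\begin{itemize}
\item $H^1(\fr{g}_{\bb{C}})\cong H^1(M;\bb{C})$ as mixed Hodge structures;
\item $H^2(\fr{g}_{\bb{C}})$ is isomorphic to a sub-mixed Hodge structure of $H^2(M;\bb{C})$.
\end{itemize}
Deligne's theorem for smooth varieties says that $H^n(M)$ has weights in $[n,2n]$ and Hodge numbers $h^{p,q}\neq 0$ only for $p,q\le n$. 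For $n=1$ this allows only the types $(1,0),(0,1),(1,1)$. For $n=2$ it allows only the types $(2,0),(1,1),(0,2),(2,1),(1,2),(2,2)$. Transporting these type constraints back through the isomorphism and the injection yields exactly (W).

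\emph{Main obstacle.} The hard part is Step 2: proving that the minimal model inherits a mixed Hodge structure functorially, and that the comparison map is compatible with it. This requires Morgan's inductive construction of the minimal model inside the category of mixed Hodge complexes, using strictness to split extensions at each stage. I would cite \cite{Mor78} for this rather than reprove it. Steps 1 and 3 are then formal consequences.
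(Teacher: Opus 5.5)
Your proposal is correct. It is essentially the standard deduction behind Morgan's theorem, which the paper does not reprove but simply cites as \cite[Theorem (9.4)]{Mor78}: Morgan's mixed Hodge structure on the $1$-minimal model $\bw^*\fr{g}^*$, the isomorphism on $H^1$ and injection on $H^2$ into $H^*(M)$, and Deligne's bound that the Hodge types of $H^n(M)$ satisfy $0\le p,q\le n\le p+q$.

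One point to tighten: $A^*(M)$ carries a mixed Hodge complex rather than a mixed Hodge structure. So what Morgan supplies, and all that Step 3 needs, is that the induced maps on $H^1$ and $H^2$ are morphisms of mixed Hodge structures.
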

We show the main theorem by proving the following proposition.
In the following proposition, we show that $k=3$ or $4$. We then exclude the case $k=4$ using the author's previous work
\cite[Proposition 3.2]{Shi25}.
\begin{prop}[\cref{general}]\label{generalintro}
Let \(k\geq3\). Suppose that $\fr{n}_k(\bb R)$ carries a mixed Hodge
structure satisfying the conditions in \cref{Morganintro}. Then we have $k=3$.
\end{prop}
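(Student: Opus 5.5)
The plan is to reduce the problem to a combinatorial question about a bigraded Lie algebra and then play the weight bound in \textbf{(W)} against the conjugation symmetry. The argument is expected to prove first that $k\in\{3,4\}$ and then to exclude $k=4$ by \cite[Proposition 3.2]{Shi25}.

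First, take the Deligne splitting of the mixed Hodge structure on $\fr{n}_k(\bb C)$. This gives $\fr{n}_k(\bb C)=\bo_{p,q}\fr{g}_{p,q}$ with $[\fr{g}_{p,q},\fr{g}_{r,s}]\subset\fr{g}_{p+r,q+s}$, compatible with the weight filtration, and with $\dim\fr{g}_{p,q}=\dim\fr{g}_{q,p}$ (conjugation symmetry). Since $H^1=(\fr{g}/[\fr{g},\fr{g}])^*$, condition \textbf{(W)} on $H^1$ lets us choose bihomogeneous generators $y_1,\dots,y_{k-1}$ of bidegrees in $\{(-1,0),(0,-1),(-1,-1)\}$, that is, of weight $w_i\in\{1,2\}$. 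The Chevalley--Eilenberg complex is then bigraded and $H^2$ inherits the bigrading. Via the Hopf-type identification of $H_2$ with the minimal relation module, every minimal relation among the $y_i$ has total weight at most $4$ by \textbf{(W)} on $H^2$.

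Second, compare with the known structure of $\fr{n}_k$. For the standard generators $x_i=e_{i,i+1}$, the minimal relations are $[x_i,x_j]=0$ for $|i-j|\ge2$ and the cubic Serre-type relations $[x_i,[x_i,x_{i+1}]]=[x_{i+1},[x_i,x_{i+1}]]=0$; correspondingly $\dim H^2=(k-2)(k+1)/2$ by Kostant. The aim is to show that a relation of bracket length $3$ involving a weight-$2$ generator would have weight $\ge5$. This would force most generators, and ideally all of them, to have weight $1$, so that the weight grading agrees with the lower central series grading $\fr{n}_k=\bo_{j=1}^{k-1}\fr{n}^{(j)}$ and each $\fr{n}^{(j)}$ is a pure piece of weight $-j$ carrying a conjugation-symmetric Hodge decomposition. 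Then the one-dimensional center $\fr{n}^{(k-1)}=\ang{e_{1k}}$ must be of type $(-p,-p)$. Similarly, the two-dimensional layer $\fr{n}^{(k-2)}=\ang{e_{1,k-1},e_{2,k}}$ and the $(k-1)$-dimensional weight-one part $V=\fr{g}_{-1,0}\o\fr{g}_{0,-1}$ must split into conjugate pairs. Using that the bracket $V\times\fr{n}^{(k-2)}\to\fr{n}^{(k-1)}$ is a nondegenerate-type pairing only through $e_{12}$ and $e_{k-1,k}$, I would track which Hodge types can pair to $(-p,-p)$. The intended result is a contradiction once $k\ge5$, leaving $k\in\{3,4\}$.

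The main obstacle is the second step. The bihomogeneous generators $y_i$ need not be the standard $x_i$, so relations in the $y_i$ are linear combinations of the standard ones rather than the standard relations themselves. To control their weights, I would work with the associated graded for the lower central series together with the weight filtration. One can argue that the weight-$2$ part of $V^*$ contributes cohomology classes $y_a^*\w\xi$ in $H^2$ whose weight would exceed $4$ whenever the corresponding element is part of a cubic relation, and every generator of $\fr{n}_k$ is involved in one. If that reduction only yields $k\le4$ rather than $k=3$, because the mixed-weight case survives for $k=4$, I would stop there and invoke \cite[Proposition 3.2]{Shi25} to rule out $\fr{n}_4(\bb R)$, concluding $k=3$.
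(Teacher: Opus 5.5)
Your outer frame (first reach $k\le 4$, then discard $k=4$ via \cite[Proposition 3.2]{Shi25}) matches the paper's. However, both steps that carry the content have genuine gaps.

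\textbf{Step 2.} You use only the total-weight bound on relations, and your key claim is false as stated. A bracket $[y_a,[y_b,y_c]]$ in which a weight-two generator occurs once has weight $2+1+1=4$, which \textbf{(W)} allows. Since the bihomogeneous $y_a$ are not the standard $x_i$, nothing forces a weight-two generator to occur twice. More to the point, \textbf{(W)} restricts Hodge \emph{types} ($H^2_{p,q}=0$ unless $p,q\le 2$), not just weights, and this is what does the work in the paper. The closed forms it uses, $x_i^{1,0}\w y_i^{2,0}$ and $x_i^{1,0}\w y_i^{2,1}$, have types $(3,0)$ and $(3,1)$. These are weights $3$ and $4$, so a weight-only argument cannot detect them.

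\textbf{Step 3.} This step fails for a structural reason. For every odd $k\ge 5$, \cref{bigradOK} gives a Lie algebra bigrading of $\fr{n}_k(\bb R)\otimes\bb C$ satisfying \textbf{(W)} and \textbf{(H)} in which all generators have weight one: $E_{2i-1,2i}$ has type $(-1,0)$ and $E_{2i,2i+1}$ has type $(0,-1)$. In that bigrading, with $p=(k-1)/2$:
\begin{itemize}
\item the centre $E_{1k}$ has type $(-p,-p)$;
\item the lines $\ang{E_{1,k-1}}$ and $\ang{E_{2,k}}$ have the conjugate types $(-p,-p+1)$ and $(-p+1,-p)$;
\item the pairing $V\times\fr{n}^{(k-2)}\to\fr{n}^{(k-1)}$ through $E_{12}$ and $E_{k-1,k}$ lands in type $(-p,-p)$.
\end{itemize}
So bookkeeping of which types pair to $(-p,-p)$ is consistent and cannot give a contradiction. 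Your centre argument only excludes even $k$ in the all-weight-one case.

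The missing idea is to use complex conjugation on the \emph{real} closed forms $x_i=e_{i,i+1}$, not merely dimension symmetry. Write $x_i=x_i^{1,0}+x_i^{0,1}+x_i^{1,1}$ with $x_i^{0,1}=\overline{x_i^{1,0}}$, and split $dy_i=x_i\w x_{i+1}$ into types. The type bound in \textbf{(W)}, applied through \cref{useful}, kills the $(2,0)$, $(0,2)$, $(2,1)$, $(1,2)$ and $(2,2)$ parts. In particular $x_i^{1,0}\w x_{i+1}^{1,0}=0$. Together with these vanishings, reality forces $x_i^{1,0}\neq 0$ for every $i$. Hence $\dim H^1_{1,0}=\dim H^1_{0,1}=1$, and similarly $\dim H^1_{1,1}\le 1$, so $k-1\le 3$.

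This is exactly what breaks in \cref{bigradOK}: there each real $x_i$ is purely of type $(1,0)$ or $(0,1)$, which is impossible for a real splitting. Note also that the paper never needs your reduction to the all-weight-one case; it bounds $\dim H^1_{1,0}$, $\dim H^1_{0,1}$ and $\dim H^1_{1,1}$ directly.
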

\subsection{Comparison with the previous work}
The key point of the proof of main result is ``conjugation symmetry''.
After real splitting, a mixed Hodge structure
$(\fr{g},W,F)$ compatible with the Lie bracket gives
a bigrading
$
\fr{g}_{\bb{C}}
=
\bo_{p,q}\fr{g}_{p,q}
$
such that
\[
\overline{\fr{g}_{p,q}}=\fr{g}_{q,p}.
\]
In particular, we have
\[
\dim\fr{g}_{p,q}=\dim\fr{g}_{q,p}.
\]
The induced bigrading on cohomology also satisfies
\[
\overline{H^j_{p,q}}=H^j_{q,p},
\]
and hence
\[
\dim H^j_{p,q}=\dim H^j_{q,p}.
\]

Thus, \cref{Morganintro} implies that the Lie algebra
corresponding to a torsion-free nilpotent quasi-projective
group admits a bigrading satisfying the weight condition
{\bf (W)} and the dimension symmetry condition
\[
{\bf (H)}:\quad
\dim\fr{g}_{p,q}=\dim\fr{g}_{q,p},
\qquad
\dim H^j_{p,q}=\dim H^j_{q,p},
\quad j=1,2.
\]
However, the following remark shows that these conditions
do not capture the full restriction imposed by
a mixed Hodge structure.

\begin{rem}[\cref{bigradOK}]\label{rem:bigrading-MHS}
For every odd integer $k\geq5$, the complexification
$\fr{n}_k(\bb{R})\otimes\bb{C}$ admits a Lie algebra
bigrading
\[
\fr{n}_k(\bb{R})\otimes\bb{C}
=
\bo_{\substack{p+q\leq-1\\p,q\leq0}}\fr{n}_{p,q}
\]
satisfying conditions {\bf (W)} and {\bf (H)}.
See \cref{bigradOK} for the construction.

However, the proof of our main theorem shows that
$\fr{n}_k(\bb{R})$ admits no mixed Hodge structure
satisfying the conditions in \cref{Morganintro}.
\end{rem}

This remark shows that conditions {\bf (W)} and {\bf (H)}
alone do not imply our main theorem.
Conjugation symmetry gives a further restriction
on the Lie algebras of nilpotent quasi-projective groups.
Our proof uses this symmetry, rather than only
the equality of dimensions.
\subsection{Related Topics}
In this part, we introduce three topics related to the main theorem.
The first is Serre's problem for smooth quasi-projective
complex algebraic varieties.
The second is Campana's question on nilpotency class.
The third is comparison with nilpotent K\"ahler groups.
\subsubsection{Serre's problem for lattices in nilpotent Lie groups}
Serre proved that every finite group is the fundamental group
of a smooth projective complex algebraic variety
\cite[Proposition 15]{Ser58}. The following problem is known as Serre's problem.
\begin{prob}
Characterize the finitely presented groups that are the
fundamental groups of smooth quasi-projective complex
algebraic varieties.
\end{prob}

Every smooth quasi-projective complex algebraic variety
has the homotopy type of a finite CW-complex.
In particular, its fundamental group is finitely presented.
Serre's problem asks which finitely presented groups arise
in this way.

Since the class of finitely presented groups is broad,
one often fixes a class $\mathcal{C}$ and studies
Serre's problem within $\mathcal{C}$. For exmaple see \cite{DPS08,DPS09,FS14,Rog24}.
In this paper, we study Serre's problem for lattices
in simply connected nilpotent Lie groups.
Morgan's mixed Hodge theory gives necessary conditions
on their Malcev Lie algebras \cite{Mor78}.
Related results on nilpotent quotients under the assumption
that the quasi-Albanese map is proper are given in
\cite{AC25}.
The author studied torsion-free nilpotent quasi-projective
groups of rank at most seven in \cite{Shi25}.
The general problem for lattices in simply connected
nilpotent Lie groups remains open.

Our main theorem solves this problem for lattices
in the unitriangular Lie groups $U_k(\bb{R})$.
For every $k\geq 3$, it determines whether a lattice
in $U_k(\bb{R})$ is a quasi-projective group.
\subsubsection{Campana's question: two-step nilpotency problem}
Campana asked whether a torsion-free nilpotent
K\"ahler group can have nilpotency class at least three
\cite{Cam95}.
The same question for quasi-K\"ahler groups was also raised
by Aguilar Aguilar and Campana \cite{AC25}.
In terms of lattices, we consider the following problem.

\begin{prob}
Let $\Gamma$ be a lattice in a simply connected
nilpotent Lie group $G$.
Suppose that $\Gamma$ is the fundamental group of
a smooth quasi-projective complex algebraic variety
or a quasi-K\"ahler manifold.
Is the nilpotency class of $\Gamma$ at most two?
\end{prob}

All known examples of torsion-free nilpotent fundamental
groups in these settings have nilpotency class at most two
\cite{AC25,Rog26}.

This paper considers the following question:
if a fundamental group of higher nilpotency class exists,
what restrictions must its group structure satisfy?
We focus on lattices in the unitriangular Lie groups
$U_k(\bb{R})$.
Their nilpotency class is $k-1$, so this family contains
groups of arbitrarily high nilpotency class.
Our main theorem excludes every lattice in $U_k(\bb{R})$
for $k\geq4$ from the class of quasi-projective groups.

There has also been progress under additional geometric
assumptions.
Rogov proved the two-step bound under conditions
on higher Albanese maps
\cite[Theorem B]{Rog25}.
He also proved that the fundamental group of an aspherical
normal complex algebraic variety is virtually two-step
nilpotent if it is virtually nilpotent \cite{Rog26}.

Cao, Deng, Hacon, and P\u{a}un proved that the image
of every complex linear representation of the fundamental
group of a smooth special quasi-projective variety
is virtually nilpotent of class at most two
\cite[Theorem A]{CDHP26}.
In particular, their theorem gives a positive answer
to the above problem for smooth special quasi-projective
varieties with torsion-free nilpotent fundamental group.
Related results and examples with nonabelian two-step
nilpotent fundamental group are given in \cite{CDY25c}.

However, the existence of a smooth quasi-projective variety
or a quasi-K\"ahler manifold with torsion-free nilpotent
fundamental group of class at least three remains open.
\subsubsection{Comparison with nilpotent K\"ahler groups}
Assume that a lattce $\Gamma$ in a simply-connected nilpotent Lie group $G$ is the fundamental group of a compact
K\"ahler manifold.
Then $\fr{g}$ is quadratically presented.
This follows from the formality theorem of
Deligne--Griffiths--Morgan--Sullivan \cite{DGMS75}
and the relation between $1$-minimal models and Malcev Lie algebras
\cite{Mor78} (see also \cite{CT95,CT97}).

Let $\fr{n}_k(\bb{R})$ be the Lie algebra of strictly upper triangular
real $k\times k$ matrices.
For $k\geq 3$, this Lie algebra is not quadratically presented.
Thus, no lattice in the corresponding simply connected nilpotent
Lie group is the fundamental group of a compact K\"ahler manifold.

However, the answer is different for non-compact varieties.
Let $E$ be an elliptic curve, and let $L$ be a line bundle
of nonzero degree on $E$. Remove the zero section from $L$, and write $L^\times$
for the resulting space.
Then $L^\times$ is a smooth quasi-projective variety
and a $\bb{C}^\ast$-bundle over $E$.
Its fundamental group gives a central extension
\[
1\longrightarrow \bb{Z}
\longrightarrow \pi_1(L^\times)
\longrightarrow \bb{Z}^2
\longrightarrow 1.
\]
Its real Malcev Lie algebra is isomorphic to $\fr{n}_3(\bb{R})$ (see \cite{AC25,HZ87,Mor78}). The main theorem of this paper gives a negative answer
to the following question unless $k=3$.
\begin{prob}
Can a lattice in $U_k(\bb R)$ be the fundamental group
of a smooth quasi-projective complex variety over $\bb C$?
\end{prob}
\subsection{AI Declaration}
The author used ChatGPT for language editing and other editional assistance in preparing the manuscript.
\subsection{Acknowledgement}
The author would like to thank his supervisor, Hisashi Kasuya at Nagoya University, for helpful advice on the introduction, on improving the proof of \cref{general} and discussions that led him to consider the difference between mixed Hodge structures and bigradings. The author was supported by JSPS KAKENHI Grant Number 26KJ1608.
\section{Mixed Hodge structures and Lie algebras}
We recall the basic definitions of mixed Hodge structures
and Lie algebras in this category.
See \cite{Del71} for mixed Hodge structures,
and \cite{Mor78,Hai87} for their relation to
Malcev Lie algebras.
All vector spaces and Lie algebras in this section
are finite-dimensional over $\bb{Q}$.
For a vector space $V$, we write
\[
V_{\bb{R}}=V\otimes_{\bb{Q}}\bb{R},
\qquad
V_{\bb{C}}=V\otimes_{\bb{Q}}\bb{C}.
\]

\begin{definition}
A \emph{pure Hodge structure of weight $m$}
on a rational vector space $V$ is a decomposition
\[
V_{\bb{C}}
=
\bigoplus_{p+q=m} V^{p,q}
\]
such that
\[
\overline{V^{p,q}}=V^{q,p}.
\]
Its Hodge filtration is
\[
F^aV_{\bb{C}}
=
\bigoplus_{\substack{p+q=m\\p\geq a}}V^{p,q}.
\]
\end{definition}

\begin{definition}
A \emph{mixed Hodge structure} on $V$ consists of
a finite increasing filtration $W_\bullet$ on $V$
and a finite decreasing filtration $F^\bullet$
on $V_{\bb{C}}$.
For every integer $m$, the filtration induced by $F^\bullet$
on
\[
\operatorname{Gr}_m^W V
=
W_mV/W_{m-1}V
\]
must define a pure Hodge structure of weight $m$.

We call $W_\bullet$ the \emph{weight filtration}
and $F^\bullet$ the \emph{Hodge filtration}.
The mixed Hodge structure is pure of weight $m$ if
\[
W_{m-1}V=0,
\qquad
W_mV=V.
\]
\end{definition}

A morphism of mixed Hodge structures is a rational
linear map that preserves both filtrations.
We write $\mathrm{MHS}_{\bb{Q}}$ for this category.
It is an abelian tensor category.
Every morphism is strict for both filtrations;
see \cite[Section~2.3]{Del71}.
In particular, kernels, images, and cokernels
admit natural mixed Hodge structures.

Tensor products, duals, and exterior powers also have
natural mixed Hodge structures.
For example,
\[
W_m(V\otimes U)
=
\sum_{a+b=m}W_aV\otimes W_bU,
\]
and
\[
F^p(V\otimes U)_{\bb{C}}
=
\sum_{a+b=p}F^aV_{\bb{C}}\otimes F^bU_{\bb{C}}.
\]

\begin{definition}\label{def:MHS-Lie}
A \emph{Lie algebra with a compatible mixed Hodge structure}
is a rational Lie algebra $\fr{g}$ with a mixed Hodge structure
such that the bracket
\[
[-,-]\colon
\bw^2\fr{g}\longrightarrow\fr{g}
\]
is a morphism of mixed Hodge structures.

Equivalently, $\fr{g}$ is a Lie algebra object
in $\mathrm{MHS}_{\bb{Q}}$.
In particular,
\[
[W_a\fr{g},W_b\fr{g}]
\subset W_{a+b}\fr{g},
\qquad
[F^p\fr{g}_{\bb{C}},F^q\fr{g}_{\bb{C}}]
\subset F^{p+q}\fr{g}_{\bb{C}}.
\]
\end{definition}

\begin{rem}\label{rem:Lie-cohomology-MHS}
A compatible mixed Hodge structure on $\fr{g}$
induces a mixed Hodge structure on its Lie algebra
cohomology $H^j(\fr{g})$ for every $j$.

Indeed, consider the Chevalley--Eilenberg complex
with trivial coefficients:
\[
C^\bullet(\fr{g})
=
\left(\bw^\bullet\fr{g}^*,d\right).
\]
Each space $\bw^j\fr{g}^*$ has the mixed Hodge
structure induced by duals and exterior powers.
On degree-one elements, the differential is the negative
dual of the bracket:
\[
d\alpha(u,v)=-\alpha([u,v]).
\]
Thus,
\[
d\colon
\fr{g}^*\longrightarrow\bw^2\fr{g}^*
\]
is a morphism of mixed Hodge structures.
The differential on higher degrees is obtained
by the Leibniz rule, so it is also a morphism
of mixed Hodge structures.

Therefore,
\[
H^j(\fr{g})
=
\frac{
\Ker\bigl(d\colon C^j(\fr{g})\to C^{j+1}(\fr{g})\bigr)
}{
\Im
\bigl(d\colon C^{j-1}(\fr{g})\to C^j(\fr{g})\bigr)
}
\]
has a natural mixed Hodge structure.
This follows from the abelian category properties
recalled above.
The cup product is also a morphism of mixed Hodge structures.
\end{rem}

\begin{rem}\label{rem:real-splitting-Lie}
After applying Deligne's real splitting in \cite{Del94}, we write
\[
\fr{g}_{\bb C}=\bo_{p,q}\fr{g}_{p,q}.
\]
Since the splitting is functorial and compatible
with tensor products, the bracket is a morphism.
Thus we have
\[
[\fr{g}_{p,q},\fr{g}_{r,s}]
\subset\fr{g}_{p+r,q+s},
\qquad
\overline{\fr{g}_{p,q}}=\fr{g}_{q,p}.
\]

Give $(\fr{g}_{p,q})^*$ type $(-p,-q)$.
This defines a bigrading on the Chevalley--Eilenberg complex:
\[
C^j(\fr{g}_{\bb{C}})
=
\bigoplus_{p,q}C^j_{p,q},
\qquad
d(C^j_{p,q})\subset C^{j+1}_{p,q}.
\]
Consequently,
\[
H^j(\fr{g}_{\bb{C}})
=
\bigoplus_{p,q}H^j_{p,q},
\]
where
\[
H^j_{p,q}
=
\frac{
\Ker\bigl(d\colon C^j_{p,q}\to C^{j+1}_{p,q}\bigr)
}{
\Im \bigl(d\colon C^{j-1}_{p,q}\to C^j_{p,q}\bigr)
}.
\]
Moreover,
\[
\overline{H^j_{p,q}}=H^j_{q,p},
\qquad
W_mH^j(\fr{g}_{\bb{C}})
=
\bigoplus_{p+q\leq m}H^j_{p,q}.
\]
These components describe the real splitting
of the induced mixed Hodge structure on Lie algebra cohomology.
\end{rem}
\section{Cohomology of strictly upper triangular Lie algebras}\label{Kostant}
In this section, we recall the cohomology of strictly upper triangular Lie algebras $\fr{n}_k(\bb R)\otimes\bb C$ proven by Kostant \cite{Kos61}. We first recall Kostant's theorem for the nilradical of a Borel subalgebra. 
\begin{thm}[{\cite[Theorem 5.14, p.~362]{Kos61}}]
Let $\fr{s}$ be a complex semisimple Lie algebra,
and let
\[
\fr{b}=\fr{h}\oplus\fr{n}
\]
be a Borel subalgebra with Cartan subalgebra $\fr{h}$
and nilradical $\fr{n}$.
Let $\Delta_+$ be the corresponding set of positive roots,
let $\Delta_-=-\Delta_+$, and let $W$ be the Weyl group.

For each $\alpha\in\Delta_+$, choose a nonzero root vector
$E_\alpha$, and let $e_\alpha\in\fr{n}^*$ be its dual.
For $w\in W$, put
\[
\Phi_w=w(\Delta_-)\cap\Delta_+,
\qquad
\ell(w)=|\Phi_w|.
\]
Then the cohomology classes
\[
\left[
\bw_{\alpha\in\Phi_w}e_\alpha
\right],
\qquad
w\in W,\quad \ell(w)=j,
\]
form a basis of $H^j(\fr{n})$.
\end{thm}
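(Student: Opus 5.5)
The plan is to follow Kostant's Hodge-theoretic argument. We identify the cohomology $H^j(\fr{n})$ with a space of \emph{harmonic} cochains for a suitable Laplacian. We then compute that Laplacian as a scalar on each $\fr{h}$-weight space, using the Casimir operator of $\fr{s}$.

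First I put a positive definite Hermitian form on $\fr{s}$. Take a compact real form $\fr{u}$ with conjugation $\tau$, and set $\{x,y\}=-B(x,\tau y)$, where $B$ is the Killing form. Then the root vectors $E_\alpha$ are orthogonal, $\tau E_\alpha$ is a multiple of $E_{-\alpha}$, and this identifies $\fr{n}^*$ with $\fr{n}_-=\bo_{\alpha\in\Delta_+}\fr{s}_{-\alpha}$. It induces a Hermitian form on $C^\bullet(\fr{n})=\bw^\bullet\fr{n}^*$. Let $d^*$ be the adjoint of the Chevalley--Eilenberg differential and put $L=dd^*+d^*d$. Standard finite-dimensional Hodge theory gives $H^j(\fr{n})\cong\Ker L\cap C^j(\fr{n})$. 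The whole complex is a module for $\fr{h}$: $\fr{h}$ acts on $\fr{n}$ by the adjoint action, and $d$, $d^*$ commute with it. Hence $L$ preserves each weight space $C^j(\fr{n})_\mu$. The weights occurring in $C^j$ are $-\langle\Phi\rangle$, where $\Phi\subset\Delta_+$, $|\Phi|=j$, and $\langle\Phi\rangle=\sum_{\alpha\in\Phi}\alpha$.

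The key step is Kostant's formula
\[
L\big|_{C(\fr{n})_{-\mu}}=\tfrac12\bigl(|\rho|^2-|\rho-\mu|^2\bigr)\cdot\mathrm{id}.
\]
Here $\rho$ is half the sum of the positive roots and $|\cdot|$ is the norm induced by $B$. To prove it, I would regard $\bw\fr{n}^*\cong\bw\fr{n}_-$ as a piece of $\bw(\fr{s}/\fr{b})$. I would then write $d$ and $d^*$ explicitly in terms of the operators $\theta(E_\alpha)$, $\theta(E_{-\alpha})$, $\varepsilon(e_\alpha)$ and $\iota(E_\alpha)$. Expanding $dd^*+d^*d$ should give a difference of Casimir-type expressions: the Casimir of $\fr{s}$ acting on the spin-like module $\bw\fr{n}_-$, minus the $\fr{h}$-part. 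The $\fr{h}$-part contributes $|\rho-\mu|^2$ and the rest contributes the constant $|\rho|^2$. This bookkeeping, getting the signs and the $\rho$-shift right, is the main obstacle. I would first check it by hand in $\fr{sl}_2$ and $\fr{sl}_3$ before trusting the general identity.

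Granting the formula, $L$ is positive semidefinite, so $\Ker L$ is the sum of the weight spaces with $|\rho-\mu|=|\rho|$, where $\mu=\langle\Phi\rangle$ for $\Phi\subset\Delta_+$. Now $\rho-\langle\Phi\rangle$ is a weight of the irreducible representation of highest weight $\rho$. Indeed, that representation is $\bw\fr{n}_-$ twisted by $\rho$, up to multiplicity $2^{\text{rank}}$. Its weights of maximal norm $|\rho|$ are exactly the extremal weights $w\rho$, $w\in W$. Hence $\rho-\langle\Phi\rangle=w\rho$. The classical identity $\rho-w\rho=\langle\Phi_w\rangle$ then shows the condition is $\langle\Phi\rangle=\langle\Phi_w\rangle$. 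A short argument shows this forces $\Phi=\Phi_w$: the weight $\rho-w\rho$ is extremal, hence has multiplicity one, so $\Phi_w$ is the unique subset of $\Delta_+$ with that sum.

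Consequently each harmonic weight space is one-dimensional, spanned by $\bw_{\alpha\in\Phi_w}e_\alpha$, and lies in degree $|\Phi_w|=\ell(w)$. Distinct $w$ give distinct weights $w\rho$, because $\rho$ is regular. Hence these harmonic cochains are linearly independent, and their classes form a basis of $H^j(\fr{n})$ for $\ell(w)=j$, as claimed.
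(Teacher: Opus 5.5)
The paper gives no proof of this statement; it quotes it from Kostant. Your outline follows Kostant's own route: Hodge theory for the compact-form Hermitian structure, $L$ acting by a scalar on each $\fr h$-weight space, then $\rho$-combinatorics. If the Laplacian formula is granted, your deduction is sound, apart from one false remark treated in the next paragraph. The gap is that this formula carries the whole content of the theorem, and you do not derive it.

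The mechanism you sketch for the formula is also not the right one. The space $\bw\fr n_-$ has no natural $\fr s$-action, so there is no ``Casimir of $\fr s$ on $\bw\fr n_-$'' to produce $|\rho|^2$. In Kostant's argument one first identifies $\fr n^*\cong\fr n_-$ via $B$. One then shows that $d^*$ is, up to a nonzero constant, the boundary operator of $\fr n_-$-homology on $\bw\fr n_-\otimes V$, and expands $dd^*+d^*d$ using $\theta$, $\varepsilon$, $\iota$. Let $V$ be an irreducible coefficient module of highest weight $\lambda$, and let $\Omega_{\fr s}$ be the Casimir element. On the $\fr h$-weight-$\nu$ subspace the outcome is $2L=\bigl(\Omega_{\fr s}|_V+|\rho|^2\bigr)-|\nu+\rho|^2=|\lambda+\rho|^2-|\nu+\rho|^2$. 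So the $\fr s$-Casimir acts only on the coefficients, which contribute $0$ for trivial coefficients, and $|\rho|^2$ is the constant coming from the $\rho$-shift. As written, your proposal reduces the statement to Kostant's Laplacian formula rather than proving it.

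Separately, your aside that $V_\rho$ is $\bw\fr n_-\otimes\bb C_\rho$ only ``up to multiplicity $2^{\mathrm{rank}}$'' is false. Your uniqueness step needs exactly the opposite. With such a multiplicity, the extremal weight $w\rho$ would occur $2^{\mathrm{rank}}$ times in $\bw\fr n_-\otimes\bb C_\rho$, so that many subsets $\Phi$ would have sum $\rho-w\rho$. The correct fact is the exact identity $e^{\rho}\prod_{\alpha\in\Delta_+}(1+e^{-\alpha})=\prod_{\alpha\in\Delta_+}(e^{\alpha/2}+e^{-\alpha/2})=\mathrm{ch}\,V_\rho$. This is Weyl's character formula at $\lambda=\rho$, and both sides have dimension $2^{|\Delta_+|}$. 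The factor $2^{\mathrm{rank}}$ belongs instead to $\bw\fr s\cong 2^{\mathrm{rank}}\,V_\rho\otimes V_\rho$.

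You can also prove uniqueness without representation theory. Write $\rho-\langle\Phi\rangle=\frac12\sum_{\beta\in\Psi}\beta$ with $\Psi=(\Delta_+\setminus\Phi)\cup(-\Phi)$. If this equals $w\rho$, then $w^{-1}\Psi$ contains exactly one root from each pair $\pm\beta$ and has half-sum $\rho$. Its half-sum is also $\rho-\langle\Phi'\rangle$, where $\Phi'$ is the set of positive roots whose negatives lie in $w^{-1}\Psi$. Hence $\langle\Phi'\rangle=0$. A nonempty sum of positive roots pairs positively with $\rho$, so $\Phi'=\emptyset$ and $w^{-1}\Psi=\Delta_+$. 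This gives $\Phi=w(\Delta_-)\cap\Delta_+=\Phi_w$.
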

We apply this theorem to $\fr{n}_k(\bb{R})\otimes\bb C$.
Let $\fr{b}_k(\bb{C})$ be the upper triangular subalgebra
of $\fr{sl}_k(\bb{C})$, and let $\fr{h}_k(\bb{C})$
be its diagonal subalgebra.
Then
\[
\fr{b}_k(\bb{C})
=
\fr{h}_k(\bb{C})\oplus (\fr{n}_k(\bb{R})\otimes\bb C),
\]
so $\fr{n}_k(\bb{R})\otimes\bb C$ is the nilradical of a Borel subalgebra.

For
\[
H=\operatorname{diag}(h_1,\dots,h_k)\in\fr{h}_k(\bb{C}),
\]
let $\varepsilon_i(H)=h_i$.
Since
\[
[H,E_{ij}]=(h_i-h_j)E_{ij},
\]
we have
\[
\Delta_+
=
\{\varepsilon_i-\varepsilon_j\mid 1\leq i<j\leq k\},
\qquad
\fr{n}_k(\bb{R})\otimes\bb C
=
\bo_{1\leq i<j\leq k}\bb{C}E_{ij}.
\]
The simple roots are given by
\[
\alpha_i=\varepsilon_i-\varepsilon_{i+1},
\qquad 1\leq i\leq k-1.
\]

Let $e_{ij}$ be the dual of $E_{ij}$, and put
\[
x_i=e_{i,i+1},
\qquad
y_i=-e_{i,i+2}.
\]
Thus $x_i$ is dual to the root vector of $\alpha_i$,
and $-y_i$ is dual to the root vector of
$\alpha_i+\alpha_{i+1}$.

The Weyl group of $\fr{sl}_k(\bb{C})$ is identified
with the symmetric group $\fr{S}_k$
\cite[Chapter I, Example 7.14]{Mil13}.
Under this identification, the simple reflection $s_i$
corresponds to the transposition $(i\;\;i+1)$.
Its elements of length two are
\[
s_i s_j\quad(j-i\geq2),
\qquad
s_i s_{i+1},
\qquad
s_{i+1}s_i.
\]
Their root sets are
\[
\begin{aligned}
\Phi_{s_i s_j}
&=\{\alpha_i,\alpha_j\},
&&j-i\geq2,\\
\Phi_{s_i s_{i+1}}
&=\{\alpha_i,\alpha_i+\alpha_{i+1}\},\\
\Phi_{s_{i+1}s_i}
&=\{\alpha_{i+1},\alpha_i+\alpha_{i+1}\}.
\end{aligned}
\]
Therefore, Kostant's theorem gives the following basis
of $H^2(\fr{n}_k(\bb{R})\otimes\bb C)$:
\[
\begin{aligned}
&[x_i\wedge x_j],
&&1\leq i<j\leq k-1,\quad j-i\geq2,\\
&[x_i\wedge y_i],\quad[x_{i+1}\wedge y_i],
&&1\leq i\leq k-2.
\end{aligned}
\]
\section{Mixed Hodge structures and the fundamental groups of smooth quasi-projective varieties}
In this section, we review necessary conditions proved by Morgan
for torsion-free nilpotent groups to be the fundamental groups
of smooth quasi-projective varieties (\cite{Mor78}).
These conditions are stated in terms of mixed Hodge structures
on Lie algebras and their cohomology.
The theorem is as follows.
\begin{thm}\cite[Theorem {\bf (9.4)}]{Mor78}\label{Morgan}
Let $M$ be a smooth quasi-projective variety. Let $G$ be a simply-connected real nilpotent Lie group. Assume that $\pi_1(M, x)$ is a lattice in $G$. For the Lie algebra $\fr{g}$ of $G$,  there exists a mixed Hodge structure on $(\fr{g},W,F)$ such that for the induced bigrading on the cohomology $H^*(\fr{g}_{\bb{C}})=\bo_{p, q\geq 0, p+q\geq 1}H_{p, q}^*(\fr{g}_{\bb{C}})$ of the complexification $\fr{g}_{\bb{C}}=\fr{g}\otimes\bb{C}$, we have 
\ali{
&{\bf (W)}:
\begin{array}{l}
H^1(\fr{g}_{\bb{C}})
 = H_{1,0}^1(\fr{g}_{\bb{C}}) \o H_{0,1}^1(\fr{g}_{\bb{C}}) \o H_{1,1}^1(\fr{g}_{\bb{C}}) \text{ and }\\
H^2(\fr{g}_{\bb{C}})
 = H_{2,0}^2(\fr{g}_{\bb{C}}) \o H_{1,1}^2(\fr{g}_{\bb{C}}) \o H_{0,2}^2(\fr{g}_{\bb{C}})
 \o H_{2,1}^2(\fr{g}_{\bb{C}}) \o H_{1,2}^2(\fr{g}_{\bb{C}}) \o H_{2,2}^2(\fr{g}_{\bb{C}}).
\end{array}
}
\end{thm}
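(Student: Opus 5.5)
The plan is to transport Deligne's mixed Hodge theory on $H^*(M)$ to $\fr{g}$ through the $1$-minimal model, as Morgan does.

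\emph{Step 1: mixed Hodge structures on $M$ and its $1$-minimal model.} By Deligne \cite{Del71}, $H^j(M;\bb{Q})$ carries a mixed Hodge structure. Its weights lie in $[j,2j]$ and its Hodge types $(p,q)$ satisfy $0\le p,q\le j$. For $j=1$ the only possible types are $(1,0),(0,1),(1,1)$. For $j=2$ they are the six types listed in {\bf (W)}.

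Choose a good compactification $\overline{M}\supset M$ with normal crossing boundary. Morgan realizes the rational homotopy type of $M$ by a mixed Hodge diagram. On the $\bb{Q}$-side this is a rational model such as Sullivan's polynomial forms with a weight filtration. On the $\bb{C}$-side it is the logarithmic de Rham complex $\Omega^\bullet_{\overline{M}}(\log D)$, filtered by the weight and Hodge filtrations. These data are linked by filtered quasi-isomorphisms.

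The key input is Morgan's theorem that such a diagram admits a minimal model, and hence a $1$-minimal model $\mathcal{M}_1$. Each generating space of $\mathcal{M}_1$ and its differential are then compatible with a mixed Hodge structure, and the comparison map $\mathcal{M}_1\to A^\bullet(M)$ induces morphisms of mixed Hodge structures on cohomology. I would build this inductively over the stages of the $1$-minimal model. At each stage one adjoins the kernel of $H^2(\mathcal{M}_1^{(i)})\to H^2(M)$ as new degree-one generators. Because morphisms of MHS are strict, this kernel is again a mixed Hodge structure. One then uses Deligne's splitting to choose the new generators and their differential compatibly with $W$ and $F$.

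\emph{Step 2: identification with $\fr{g}$.} The $1$-minimal model depends only on the Malcev completion of $\pi_1(M)=\Gamma$. The Malcev Lie algebra of $\Gamma$ is the rational form $\fr{g}_{\bb{Q}}$ of $\fr{g}$ determined by the lattice. Since $\fr{g}$ is nilpotent, the Chevalley--Eilenberg complex $(\bw^\bullet\fr{g}_{\bb{Q}}^*,d)$ is itself a $1$-minimal model. It is therefore isomorphic to $\mathcal{M}_1$.

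Dualizing the degree-one part of $\mathcal{M}_1$ gives a mixed Hodge structure on $\fr{g}$. The differential on $\fr{g}^*$ is a morphism, so by \cref{def:MHS-Lie} the bracket is a morphism and $\fr{g}$ is a Lie algebra in $\mathrm{MHS}_{\bb{Q}}$. By \cref{rem:Lie-cohomology-MHS}, the cohomology $H^j(\fr{g})$ inherits a mixed Hodge structure.

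\emph{Step 3: the weight conditions.} A $1$-minimal model induces an isomorphism on $H^1$ and an injection on $H^2$, so $H^1(\fr{g})\cong H^1(M)$ and $H^2(\fr{g})\hookrightarrow H^2(M)$. (By Nomizu, $H^*(\fr{g})\cong H^*(\Gamma)$ as well, though only the $1$-minimal statements are needed.) These maps are morphisms of MHS, hence strict. Deligne's real splitting (\cref{rem:real-splitting-Lie}) is functorial, so the injection respects the bigradings. Consequently every $H^j_{p,q}(\fr{g}_{\bb{C}})$ with $j=1,2$ embeds into the corresponding component of $H^j(M)$, and the type restrictions from Step 1 give exactly {\bf (W)}.

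\emph{Main obstacle.} The hard step is Step 1: proving that the minimal model of a mixed Hodge diagram carries mixed Hodge structures compatible with the differential. This requires strictness of the differentials on the $E_1$ page of the weight spectral sequence (Deligne's degeneration at $E_2$), together with a careful inductive choice of bigraded generators. Here I would follow Morgan's argument via the canonical splitting rather than trying to construct the filtrations by hand.
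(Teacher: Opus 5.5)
Your proposal is correct and follows the same route as the source: the paper gives no proof of this statement and simply cites Morgan's Theorem~(9.4), whose argument is the one you outline. Namely, Morgan puts a mixed Hodge structure on the $1$-minimal model coming from a mixed Hodge diagram and identifies that model with $\bigwedge^\bullet\fr{g}_{\bb{Q}}^*$. Deligne's type bounds on $H^1(M)$ and $H^2(M)$ then pass to $H^1(\fr{g})$ and $H^2(\fr{g})$ through the isomorphism on $H^1$ and the injection on $H^2$, using strictness and functoriality of the splitting. You also correctly identify the hard step, the mixed Hodge structure on the minimal model of a mixed Hodge diagram, and rightly defer it to Morgan.
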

Consequently, we have the following.
\begin{cor}\label{bigrading}
The complexification $\fr{g}_{\bb C}$ of $\fr{g}$ admits a bigrading $\fr{g}_{\bb C}=\bo_{p+q\leq -1,p,q\leq 0}\fr{g}_{p,q}$ such that this bigrading and the induced bigrading $H^*(\fr{g}_{\bb C})=\bo_{p+q\geq 0,p,q\geq 0}H_{p,q}^*(\fr{g}_{\bb C})$ satisfy the following conditions.
\ali{
\text{{\bf (W)}}:&H^1(\fr{g}_{\bb{C}})
 = H_{1,0}^1(\fr{g}_{\bb{C}}) \o H_{0,1}^1(\fr{g}_{\bb{C}}) \o H_{1,1}^1(\fr{g}_{\bb{C}}) \text{ and }\\
&H^2(\fr{g}_{\bb{C}})
 = H_{2,0}^2(\fr{g}_{\bb{C}}) \o H_{1,1}^2(\fr{g}_{\bb{C}}) \o H_{0,2}^2(\fr{g}_{\bb{C}})
 \o H_{2,1}^2(\fr{g}_{\bb{C}}) \o H_{1,2}^2(\fr{g}_{\bb{C}}) \o H_{2,2}^2(\fr{g}_{\bb{C}}).\\
\text{{\bf (H)}}:&\dim\fr{g}_{p,q}=\dim\fr{g}_{q,p},\dim H_{s,t}^j(\fr{g}_{\bb C})=\dim H_{t,s}^j(\fr{g}_{\bb C})\text{ for }j=1,2.
}
\end{cor}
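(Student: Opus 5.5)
The plan is to obtain the required bigrading directly from Deligne's real splitting of the mixed Hodge structure supplied by \cref{Morgan}, as recalled in \cref{rem:real-splitting-Lie}. Condition \textbf{(W)} will then be inherited verbatim from \cref{Morgan}, and condition \textbf{(H)} will follow from conjugation symmetry. The only point that needs an argument is the support condition $p,q\leq 0$, $p+q\leq -1$; I expect this to be the main step.

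First, let $(\fr{g},W,F)$ be the mixed Hodge structure given by \cref{Morgan}, and apply \cref{rem:real-splitting-Lie}. This gives a decomposition $\fr{g}_{\bb C}=\bo_{p,q}\fr{g}_{p,q}$ with
\[
[\fr{g}_{p,q},\fr{g}_{r,s}]\subset\fr{g}_{p+r,q+s},
\qquad
\overline{\fr{g}_{p,q}}=\fr{g}_{q,p}.
\]
It also gives the induced bigrading $H^j(\fr{g}_{\bb C})=\bo H^j_{p,q}$ with $\overline{H^j_{p,q}}=H^j_{q,p}$. Complex conjugation is a real-linear bijection, and a real-linear bijection between complex subspaces preserves real, hence complex, dimension. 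Therefore $\dim\fr{g}_{p,q}=\dim\fr{g}_{q,p}$ and $\dim H^j_{s,t}=\dim H^j_{t,s}$, which is \textbf{(H)}. The weight description $W_mH^j=\bo_{p+q\leq m}H^j_{p,q}$ identifies the bigraded pieces of \textbf{(W)} in \cref{Morgan} with those of the splitting, so \textbf{(W)} holds for this bigrading.

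Next, I will prove the support condition. Since the bracket preserves the bigrading, $[\fr{g}_{\bb C},\fr{g}_{\bb C}]$ is a bigraded subspace. Hence I can choose a bigraded complement $V=\bo V_{p,q}$ of it. We have $H^1(\fr{g}_{\bb C})=(\fr{g}_{\bb C}/[\fr{g}_{\bb C},\fr{g}_{\bb C}])^*$, and $(\fr{g}_{p,q})^*$ has type $(-p,-q)$. So \textbf{(W)} for $H^1$ forces $V$ to be concentrated in types $(-1,0)$, $(0,-1)$ and $(-1,-1)$.

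Because $\fr{g}$ is nilpotent, any complement of the derived algebra generates $\fr{g}_{\bb C}$ as a Lie algebra. Hence $\fr{g}_{\bb C}$ is spanned by iterated brackets of elements of $V$. Such a bracket of $n\geq 1$ homogeneous elements has type equal to the sum of their types. Each summand has both entries $\leq 0$ and total $\leq -1$, so the sum has $p,q\leq 0$ and $p+q\leq -n\leq -1$. Thus $\fr{g}_{p,q}=0$ outside the stated range.

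Dually, the cochains $C^j_{p,q}$, and hence the classes $H^j_{p,q}$, can be nonzero only for $p,q\geq 0$. This gives the index range stated for the cohomology and completes the argument.
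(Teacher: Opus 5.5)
Your proposal is correct and follows the route the paper intends. The paper gives no separate proof and simply states the corollary as a consequence of Theorem~\ref{Morgan} combined with the real splitting of Remark~\ref{rem:real-splitting-Lie}: \textbf{(W)} is inherited from Morgan, and \textbf{(H)} comes from $\overline{\fr{g}_{p,q}}=\fr{g}_{q,p}$ and $\overline{H^j_{p,q}}=H^j_{q,p}$. You also justify the support condition $p,q\leq 0$, $p+q\leq -1$, using a bigraded complement of the derived algebra that generates the nilpotent algebra; the paper leaves this unstated, and your argument for it is sound.
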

\begin{eg}\label{bigradOK}
Let $\fr{n}_{2k+1}(\bb R)$ be the Lie algebra defined by
\[
\fr{n}_{2k+1}(\bb R)=\{A=(a_{ij})\mid a_{ij}=0\text{ for }i\geq j\}.
\] 
Then the matrix units $E_{ij}$, where $1\leq i<j\leq 2k+1$,
form a basis of $\fr{n}_{2k+1}(\bb{R})$ and the structure constants with respect to the basis $\{E_{ij}\}$ are all rational. Thus $E_{ij}$ forms a basis of $\fr{n}_{2k+1}(\bb R)\otimes \bb C$ as $\bb C$-Lie algebra. we define a bigrading on $\fr{n}_{2k+1}(\bb R)\otimes \bb C$. 
Assign bidegree $(-1,0)$ to $E_{2i-1,2i}$ and
bidegree $(0,-1)$ to $E_{2i,2i+1}$ for $1\leq i\leq k$. These assignments extend to a bigrading on $\fr{n}_{2k+1}(\bb{R})\otimes\bb C$ by the Lie bracket.
 Since the dual elements $e_{i,i+1}$ of 
\[
E_{i,i+1}\text{ for }1\leq i\leq 2k
\]
are in $\Ker d$, we have
\[
H^1(\fr{n}_{2k+1}(\bb R)\otimes \bb C)=\ang{[e_{i,i+1}]\mid 1\leq i\leq 2k}.
\]
By \cref{Kostant} the following cohomology
classes form a basis of $H^2(\fr{n}_{2k+1}(\bb{R})\otimes\bb C)$.
\[
\begin{aligned}
&[x_i\wedge x_j],
&&1\leq i<j\leq k-1,\quad j-i\geq 2,\\
&[x_i\wedge y_i],\quad [x_{i+1}\wedge y_i],
&&1\leq i\leq k-2.
\end{aligned}
\]
The classes $[x_i\wedge x_j]$ have bidegree
$(2,0)$, $(1,1)$, or $(0,2)$.
The classes $[x_i\wedge y_i]$ and $[x_{i+1}\wedge y_i]$
have bidegrees $(2,1)$ and $(1,2)$, respectively, when $i$ is odd,
and $(1,2)$ and $(2,1)$, respectively, when $i$ is even.
Thus, the bigrading satisfies the conditions in \cref{bigrading}.
\end{eg}
In the next section, we use the following proposition. This following gives restrictions on the weights of cohomology. For the proof, see \cite{Shi25}.
\begin{prop}\cite[Proposition 3.2]{Shi25}\label{useful}
Let $\fr{n}$ be a nilpotent Lie algebra. Assume that the complexification $\fr{n}_{\bb{C}}=\bo_{p+q\leq -1,p,q\leq 0}\fr{n}_{p,q}$ admits a bigrading satisfying the condition in \cref{Morgan}. If there exists an element $0\neq [x]\in H_{p,q}^1(\fr{n}_{\bb{C}})$ and $z_1\in \fr{n}_{-s,-t}^*$ such that $dz_1=x\w y$ for some $y\in\fr{n}_{\bb{C}}^*$, then we have
\ali{
(p+s,q+t)\in\{(2,0),(0,2),(1,1),(2,1),(1,2),(2,2)\}.
}
\end{prop}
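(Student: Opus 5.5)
The statement to prove is \cref{useful}: if $0\neq[x]\in H^1_{p,q}(\fr{n}_{\bb C})$ and $z_1\in\fr{n}^*_{-s,-t}$ satisfies $dz_1=x\w y$, then $(p+s,q+t)$ lies in the weight-$\geq2$ list allowed by condition (W) for $H^2$. The plan is to exhibit a nonzero class in $H^2$ of bidegree $(p+s,q+t)$ and then read off (W).

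\emph{Step 1 (bidegrees).} Since $d$ preserves the bigrading on $C^\bullet(\fr{n}_{\bb C})$ (\cref{rem:real-splitting-Lie}), I may replace $x$ and $y$ by their homogeneous components. A closed $1$-cochain representing a class of type $(p,q)$ may be taken in $C^1_{p,q}$; in degree one there are no coboundaries, so $x\in C^1_{p,q}$ exactly, with $dx=0$. The element $z_1$ has type $(s,t)$ in the cohomological convention, because $(\fr{n}_{-s,-t})^*$ has type $(s,t)$. Projecting the identity $dz_1=x\w y$ onto bidegree $(s,t)$ replaces $y$ by its component $y'$ of type $(s-p,t-q)$, so I may assume $y$ is homogeneous.

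\emph{Step 2 (a nonzero class).} The natural candidate is $x\w z_1$, which has bidegree $(p+s,q+t)$. It is closed, since
\[
d(x\w z_1)=dx\w z_1-x\w dz_1=-x\w x\w y=0.
\]
It remains to show that $[x\w z_1]\neq0$ in $H^2_{p+s,q+t}$. Once that holds, (W) for $H^2$ lists exactly the allowed bidegrees $(2,0),(1,1),(0,2),(2,1),(1,2),(2,2)$, and the conclusion follows.

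\emph{Step 3 (nonvanishing, the main obstacle).} Suppose $x\w z_1=dw$ for some $w\in C^1$. The plan is to use the nilpotent filtration. Choose a basis of $\fr{n}^*$ adapted to the filtration by $\Ker d$ and its successors, i.e.\ the dual of the lower central series. Since $[x]\neq0$, $x$ is a nonzero closed $1$-form, and $z_1$ is not closed because $x\w y\neq0$, which I would arrange by taking $y$ nonzero modulo $x$. The key computation is to compare leading terms. The image $d(C^1)$ consists of $2$-forms whose components lie in $\bw^2$ of "lower" filtration pieces relative to $w$. The form $x\w z_1$ involves $z_1$ of strictly higher filtration level, paired with the degree-one class $x$. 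Nonvanishing of $[x]$ means $x\notin d(C^0)=0$, so $x$ is a genuine generator.

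If this direct argument fails, I would fall back on choosing $z_1$ of minimal filtration level among elements with $dz_1\in x\w C^1$, and then derive a contradiction from $x\w z_1=dw$ by applying $d$-compatibility and comparing $\Lambda^2$ components. A simpler alternative is to use the Massey/cup-product structure: the product $x\w z_1$ should be nonzero modulo $d(C^1)$ because $dw$ has no component of the form $x\w(\text{non-closed element})$ of the relevant level unless $w$ itself contributes $z_1$ up to closed terms. This nonvanishing is where I expect the real work, and it may need a further genericity hypothesis implicit in the source \cite{Shi25}.
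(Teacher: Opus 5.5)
\textbf{There is a genuine gap: your Step 2 reduces the statement to the claim $[x\wedge z_1]\neq 0$, and that claim does not follow from the hypotheses.} Your Steps 1 and 2 are otherwise fine, and the strategy of producing a nonzero homogeneous class in $H^2$ and reading off \textbf{(W)} is the right one. The claim fails for two reasons.

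First, $y$ is part of the data, so you cannot ``arrange'' $x\wedge y\neq 0$. When $dz_1=0$ the class can vanish. On $\fr{n}_3(\bb{R})\otimes\bb{C}$, take closed $x_1,x_2$ of types $(1,0),(0,1)$ and $u$ of type $(1,1)$ with $du=x_1\wedge x_2$. Then $H^2$ has types $(2,1),(1,2)$, so \textbf{(W)} holds. With $x=x_1$, $z_1=x_2$, $y=0$, the hypotheses are satisfied, yet $x\wedge z_1=du$ is exact.

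Second, even when $x\wedge y\neq 0$, your Step 3 arguments (filtration by $\Ker d$, leading terms, Massey products) use only nilpotency, and under nilpotency alone the claim is false. In the four-dimensional filiform algebra, with $dx=dy=0$, $dz_1=x\wedge y$, $dz_2=x\wedge z_1$, the form $x\wedge z_1$ is exact. Any nonvanishing must come from \textbf{(W)}, which your Step 3 never invokes.

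The missing idea is to iterate instead of insisting on nonvanishing at the first step.
\begin{itemize}
\item If $z_1$ is a multiple of $x$, then $(s,t)=(p,q)$ and $(2p,2q)$ is in the list.
\item Otherwise $x\wedge z_1\neq 0$. If it is exact, write $x\wedge z_1=dz_2$ with $z_2\neq 0$ homogeneous of type $(p+s,q+t)$. Then $d(x\wedge z_2)=-x\wedge x\wedge z_1=0$. Since $z_2$ has weight $p+q+s+t>p+q$, it is not proportional to $x$, so $x\wedge z_2\neq 0$.
\item Continuing, $z_i$ has type $((i-1)p+s,(i-1)q+t)$, and the weights grow by $p+q\geq 1$ at each step. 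Since $\fr{n}^*$ has only finitely many nonzero bigraded pieces, for some $n\geq 1$ the nonzero closed form $x\wedge z_n$ is not exact.
\item Then $H^2_{np+s,nq+t}\neq 0$, so by \textbf{(W)} the pair $(np+s,nq+t)$ lies in the list. The list is exactly $\{(a,b)\mid 0\leq a,b\leq 2,\ a+b\geq 2\}$.
\item This set contains every pair of nonnegative integers that is componentwise at most one of its members and has total weight at least $2$. Since $(p+s,q+t)\leq(np+s,nq+t)$ componentwise and $p+q+s+t\geq 2$, the conclusion follows.
\end{itemize}
Nilpotency enters only through the termination of this chain.
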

\section{Main Result}
In this section, we prove the following theorem.
This rules out the most basic examples of nilpotent Lie algebras
of nilpotency class at least three.
\begin{thm}\label{main}
Let $\Gamma$ be a lattice in $U_k(\bb{R})$ for $k\geq 3$. $\Gamma$ is the fundamental group of a smooth quasi-projective variety if and only if $k=3$.
\end{thm}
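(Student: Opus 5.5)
The case $k=3$ needs a construction and the case $k\geq4$ needs an obstruction. For $k=3$, I would use the construction recalled in the introduction. Every lattice $\Gamma$ in $U_3(\bb R)=H_3(\bb R)$ is a central extension $1\to\bb Z\to\Gamma\to\bb Z^2\to1$ with nonzero Euler class $n$. Removing the zero section from a line bundle $L$ of degree $n$ on an elliptic curve $E$ gives a smooth quasi-projective $\bb C^*$-bundle $L^\times$. The homotopy exact sequence shows that $\pi_1(L^\times)$ is exactly this extension. So every lattice in $U_3(\bb R)$ is a quasi-projective group. For $k\geq4$ I would argue by contradiction. If $\Gamma=\pi_1(M)$, then the Malcev completion of $\Gamma$ is a rational form of $\fr n_k(\bb R)$. \cref{Morgan} together with \cref{rem:real-splitting-Lie} then gives a bigrading $\fr g_{\bb C}=\bo\fr g_{p,q}$ with $\overline{\fr g_{p,q}}=\fr g_{q,p}$, compatible with the bracket and satisfying \textbf{(W)}. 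The goal is to show that this forces $k\leq3$. This is the content of \cref{general}; \cref{useful} is the tool for the remaining small case.

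The first step is to choose homogeneous generators. Since $H^1$ has types $(1,0),(0,1),(1,1)$ only, the space $V=\fr g_{-1,0}\o\fr g_{0,-1}\o\fr g_{-1,-1}$ maps isomorphically onto $\fr g_{\bb C}/[\fr g_{\bb C},\fr g_{\bb C}]$. Hence $V$ generates $\fr g_{\bb C}$, and $\dim V=k-1$. The lower central series terms $C^j\fr g$ are bigraded subspaces. The space $C^{k-2}\fr g$ is one-dimensional, spanned by $E_{1k}$, so it lies in a single $\fr g_{p,q}$. Conjugation symmetry then forces $p=q$. Next I would use the Kostant basis of $H^2$ from \cref{Kostant}. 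The classes $[x_i\w y_i]$ and $[x_{i+1}\w y_i]$ involve duals of elements of $C^2\fr g$. In a homogeneous basis adapted to $V$, \textbf{(W)} requires their types to lie in the allowed list, so $\fr g_{p,q}\cap C^2\fr g$ can only have total weight $-3$ or $-4$ in a controlled way. Applying \cref{useful} to suitable $z_1$ dual to elements of $C^2\fr g$ and $C^3\fr g$ gives the constraint $(p+s,q+t)\in\{(2,0),(0,2),(1,1),(2,1),(1,2),(2,2)\}$. This bounds how long a chain of nonzero brackets of the generators can be before the weights exceed $(2,2)$ in some coordinate. My aim is to show that this chain length bound already gives $k\leq4$. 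The case $k=4$ is then excluded by the explicit computation of \cite[Proposition 3.2]{Shi25}, which treats rank $6$.

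The main obstacle is that the homogeneous generators in $V$ need not be the matrix units $E_{i,i+1}$. So Kostant's basis is not automatically bigraded, and \cref{useful} requires $dz_1$ to be a single wedge $x\w y$. To handle this, I would first show that the filtration by centralizers, $\fr z_j=\{X:[X,C^{j}\fr g]=0\}$, is intrinsic and therefore bigraded. For $\fr n_k$ these centralizers single out the two "end" directions $E_{12}$ and $E_{k-1,k}$ modulo $[\fr g,\fr g]$. This gives homogeneous generators $X_1,X_{k-1}$ whose brackets with $C^{k-3}\fr g$ are single products. With that choice, $z_1=$ the dual of $E_{1k}$ satisfies $dz_1=x_1\w(\cdot)$ modulo exact terms, so \cref{useful} applies. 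Then conjugation symmetry is used to pair the types of $X_1$ and $X_{k-1}$. Together with the equal-type constraint on $E_{1k}$, this should leave no admissible assignment once $k\geq5$. If this intrinsic-generator step fails, I would fall back on the graph of nonvanishing brackets of $V$, which for $\fr n_k$ is a path up to basis change. I would then argue that conjugation must reverse this path while preserving the types on the allowed list.
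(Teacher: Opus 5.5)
Your treatment of $k=3$ is fine (the paper simply cites it), but the ``only if'' direction has a genuine gap. The contradiction for $k\geq4$ is only announced (``my aim is to show'', ``this should leave no admissible assignment''), and the intermediate claims do not hold as stated.

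\begin{itemize}
\item The space $V=\fr g_{-1,0}\o\fr g_{0,-1}\o\fr g_{-1,-1}$ only surjects onto the abelianization, because $[\fr g_{-1,0},\fr g_{0,-1}]\subset\fr g_{-1,-1}$. For the Heisenberg algebra of $L^\times$, $\fr g_{-1,-1}$ is the center. For the same reason nothing prevents $C^2\fr g$ from meeting total weight $-2$.
\item Condition \textbf{(W)} restricts the types of $H^1$ and $H^2$, not those of $\fr g_{\bb C}$. So there is no built-in ``chain length bound''; it would have to come from \cref{useful}. That result needs a homogeneous $z_1$ with $dz_1=x\w y$ an honest single wedge and $0\neq[x]$ homogeneous.
\item For $z_1=e_{1k}$ one has $dz_1=-\sum_{1<j<k}e_{1j}\w e_{jk}$, which has $k-2$ terms. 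Saying ``$dz_1=x_1\w(\cdot)$ modulo exact terms'' is vacuous, because $dz_1$ is itself exact.
\end{itemize}

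Your generator step also does not work as written. The centralizer $\fr z$ of $C^{k-2}\fr g$ only yields the plane $\fr g/\fr z$ spanned by the images of $E_{12}$ and $E_{k-1,k}$, not the individual lines. Your expectation that conjugation pairs the types of $X_1$ and $X_{k-1}$ (or reverses the path) is backwards: complex conjugation fixes each real $E_{ij}$, and path reversal is the anti-transpose automorphism.

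The idea can be completed, though not as you envisage. For $k\geq4$ and $Z\in\fr z$,
\[
\operatorname{ad}(aE_{12}+bE_{k-1,k}+Z)^2E_{2,k-1}=-2abE_{1k}.
\]
So the image of $\{X:\operatorname{ad}(X)^2=0\}$ in $\fr g_{\bb C}/\fr z_{\bb C}$ is exactly the union of the two lines. The connected torus acting through the bigrading preserves this union, hence each line. Each line is real and lies in a single type, so both have type $(-1,-1)$, and $x_1=e_{12}$ is a pure $(1,1)$-form.

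Now look at $dy_1=x_1\w x_2$. For $(a,b)\in\{(2,1),(1,2),(2,2)\}$ the closed forms $x_1\w y_1^{a,b}$ have types $(3,2),(2,3),(3,3)$, so they are exact by \textbf{(W)}. The other components of $y_1$ are closed, so they lie in $\ang{x_1,\ldots,x_{k-1}}$. Hence $[x_1\w y_1]$ lies in the span of the $[x_1\w x_j]$ with $j\geq3$, which contradicts Kostant's basis. This also covers $k=4$. None of this argument is in your proposal.

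The paper avoids homogeneous generators altogether. It splits the real forms $x_i=e_{i,i+1}$ and $y_i=-e_{i,i+2}$ into bihomogeneous components and reads $dy_i=x_i\w x_{i+1}$ degree by degree. It then uses \cref{useful} to kill $y_i^{2,0},y_i^{0,2},y_i^{2,1},y_i^{1,2},y_i^{2,2}$. From this it concludes $\dim H^1_{1,0}=\dim H^1_{0,1}=1$ and $\dim H^1_{1,1}\leq1$, so $k\leq4$, with $k=4$ excluded separately.
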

Note that any lattice in $U_3(\bb R)$ is isomorphic to the fundamental group of a smooth quasi-projective variety (see \cite{AC25,HZ87,Mor78}). Thus it suffices to show the only if part. 
\begin{prop}\label{general}
Let \(k\geq3\). Suppose that $\fr{n}_k(\bb R)$carries a mixed Hodge
structure satisfying the conditions in \cref{Morgan}. Then we have $k=3$.
\end{prop}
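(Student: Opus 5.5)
The plan is to use the real structure of a Deligne splitting, and not only dimension counts. \Cref{bigradOK} shows that dimension counts alone cannot exclude odd $k\geq5$. Let $\fr{g}=\fr{n}_k(\bb{R})$ carry a mixed Hodge structure as in \cref{Morgan}. By \cref{rem:real-splitting-Lie} we get a Lie algebra bigrading $\fr{g}_{\bb C}=\bo\fr{g}_{p,q}$ with $\overline{\fr{g}_{p,q}}=\fr{g}_{q,p}$. Its total weight $w=p+q$ is a grading by real subspaces, namely the sums $\bo_{p+q=w}\fr{g}_{p,q}$, which are conjugation-stable. The argument has three steps.

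\emph{Step 1: the generators have weight $-1$.} By {\bf (W)}, $H^1$ has types $(1,0),(0,1),(1,1)$. So a homogeneous complement $V$ of $[\fr{g},\fr{g}]$ splits as $V=V_{-1}\oplus V_{-2}$ with $V_{-2}$ of type $(-1,-1)$. I would show that $V_{-2}=0$ using the degree-three relations of $\fr{n}_k$ and the bound weight $\leq4$ on $H^2$, in the form of \cref{useful}. By Kostant's basis in \cref{Kostant}, the classes $[x_i\wedge y_i]$ and $[x_{i+1}\wedge y_i]$ are nonzero, with $dy_i=\pm x_i\wedge x_{i+1}$. Feeding these into \cref{useful} with homogeneous representatives gives $2w_a+w_b\geq-4$ and $w_a+2w_b\geq-4$ for adjacent generator weights $w_a,w_b\in\{-1,-2\}$ (weights written here as nonpositive numbers on $\fr{g}$). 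A generator of weight $-2$ adjacent to anything violates one of these inequalities. One has to be careful that the homogeneous generators need not be the $E_{i,i+1}$ themselves. I would handle this by working on the associated graded of the lower central series, where the weight grading is then forced to coincide, up to sign, with the standard grading by the lower central series. Consequently $\fr{g}$ with its weight grading is isomorphic, as a real graded Lie algebra, to the Carnot grading of $\fr{n}_k$, and $V=V_{-1}=\fr{g}_{-1,0}\oplus\fr{g}_{0,-1}$.

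\emph{Step 2: an intrinsic pair of real lines.} On the real degree-one space $V_{\bb R}\cong\mathrm{span}_{\bb R}\{E_{12},\dots,E_{k-1,k}\}$, consider the set of $u$ such that $\operatorname{ad}_u\colon V\to[\fr{g},\fr{g}]/[\fr{g},[\fr{g},\fr{g}]]$ has rank $\leq1$. Writing $u=\sum c_iE_{i,i+1}$, one has $[u,E_{j,j+1}]=c_{j-1}E_{j-1,j+1}-c_{j+1}E_{j,j+2}$. I would check, for $k\geq5$, that this rank is $1$ exactly on the two lines $\bb{C}E_{12}$ and $\bb{C}E_{k-1,k}$. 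For $k=4$ this fails, because $u=E_{12}+E_{34}$ also has rank $1$, and that is why $k=4$ is treated separately. These lines are defined by a condition invariant under every graded automorphism. They are also invariant under conjugation, and each line is individually real. The torus $(\bb{C}^*)^2$ acting by $t^{-p}s^{-q}$ on $\fr{g}_{p,q}$ consists of graded automorphisms, so each line is an eigenline. Hence $E_{12}$ spans a homogeneous line of type $(-1,0)$ or $(0,-1)$. But a real line $L$ of type $(p,q)$ satisfies $L=\overline{L}\subset\fr{g}_{q,p}$, which forces $p=q$. This contradicts $p+q=-1$. Hence $k\geq5$ is impossible. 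Parity gives the same conclusion for even $k$, since $\dim\fr{g}_{-1,0}=\dim\fr{g}_{0,-1}$ forces $k-1$ to be even.

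\emph{Step 3: the case $k=4$.} Here I would invoke \cref{useful} directly, as the paper indicates. After Step 1, $k-1=3$ generators of weight $-1$ would have to split evenly between types $(-1,0)$ and $(0,-1)$, which is impossible. Alternatively, one can apply \cref{useful} to the class $[x_2\wedge y_1]$. This leaves $k=3$.

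The main obstacle is Step 1. The delicate point is passing from the abstract bigrading to the statement that the weight-graded Lie algebra is the standard Carnot grading of $\fr{n}_k$. Specifically, it must be shown that homogeneous generators can be chosen so that the cubic Kostant classes yield the inequalities above. I would first try to do everything on $\operatorname{Gr}$ of the lower central series, which is canonically $\fr{n}_k$ and inherits a compatible bigrading with the same conjugation property.
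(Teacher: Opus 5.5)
\textbf{Gap: Step 1 is asserted but not proved.} Everything downstream depends on Step 1. Step 2 needs $\fr{g}/[\fr{g},\fr{g}]$ to have only types $(-1,0),(0,-1)$, so that the distinguished lines have $p+q=-1$. The parity argument for even $k$, including $k=4$, needs the same fact.

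Your inequalities $2w_a+w_b\geq -4$ and $w_a+2w_b\geq -4$ come from applying \cref{useful} to $dy_i=x_i\w x_{i+1}$. That is legitimate only if $x_i$, $x_{i+1}$ and $y_i$ are homogeneous for the weight grading, i.e.\ only if the standard generators $E_{i,i+1}$ can be taken homogeneous. You do not show this, and the proposed remedy does not supply it. $\operatorname{Gr}$ of the lower central series is again $\fr{n}_k$. The induced weight grading on it is again an a priori arbitrary grading, whose degree-one homogeneous vectors need not be the $\bar E_{i,i+1}$. The claim that it is ``forced to coincide'' with the lower central series grading is just a restatement of $V_{-2}=0$. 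You would also have to transfer \textbf{(W)} from $H^2(\fr{g})$ to $H^2(\operatorname{Gr}\fr{g})$, which Morgan's theorem does not give directly. The paper avoids this alignment problem by never choosing homogeneous generators. It splits each closed form $x_i$ into bihomogeneous closed components $x_i^{1,0}+x_i^{0,1}+x_i^{1,1}$ and applies \cref{useful} to the bihomogeneous components of $dy_i=x_i\w x_{i+1}$.

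\textbf{How to close it.} The weight grading is the eigenspace decomposition of a one-dimensional algebraic torus in $\operatorname{Aut}(\fr{n}_k\otimes\bb{C})$.
\begin{itemize}
\item An automorphism acting trivially on $\fr{g}/[\fr{g},\fr{g}]$ is unipotent, so any torus of automorphisms acts faithfully there. Hence the diagonal torus $E_{ij}\mapsto t_it_j^{-1}E_{ij}$, of dimension $k-1$, is maximal.
\item By conjugacy of maximal tori there is an automorphism $\phi$ with every $\phi(E_{ij})$ homogeneous. Say $\phi(E_{i,i+1})$ has weight $-a_i$ with $a_i\in\{1,2\}$.
\item Transporting Kostant's classes $[x_i\w y_i]$ and $[x_{i+1}\w y_i]$ by $\phi$ gives nonzero homogeneous classes of weights $2a_i+a_{i+1}$ and $a_i+2a_{i+1}$. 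Then \textbf{(W)} forces $a_i=1$ for all $i$.
\end{itemize}

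\textbf{Once Step 1 is secured.} The rest of your argument is correct and genuinely different from the paper's. For $k\geq5$ the rank-$\leq1$ locus of $\operatorname{ad}_u$ is exactly $\bb{C}\bar E_{12}\cup\bb{C}\bar E_{k-1,k}$; for $k=4$ it is the plane spanned by $\bar E_{12}$ and $\bar E_{34}$. The connected torus fixes each of the two real lines, and a real bihomogeneous line must have type $(p,p)$. Parity disposes of even $k$. The paper instead shows that the $x_i^{1,0}$ are all nonzero and consecutively proportional, so $\dim H^1_{1,0}=\dim H^1_{0,1}=1$ and $\dim H^1_{1,1}\leq1$. This bounds $k$ without identifying any intrinsic line, and the paper handles the residual case $k=4$ by appeal to the author's earlier work.
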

\begin{proof}
Let
\[
\fr{n}_k(\bb{R})\otimes\bb{C}
=
\bo_{\substack{p+q\leq -1\\p,q\leq 0}}
\fr{n}_{p,q}
\]
be the bigrading obtained from \cref{Morgan}.
Let $e_{i,j}$ be the dual of $E_{i,j}$, and let
\[
x_i=e_{i,i+1},
\qquad
y_i=-e_{i,i+2}.
\]
Then
\[
dx_i=0,
\qquad
dy_i=x_i\wedge x_{i+1}
\quad
(1\leq i\leq k-2).
\]
The forms $x_1,\ldots,x_{k-1}$ give a basis
of the first cohomology.

By \cref{Morgan}, we can write
\[
x_i=x_i^{1,0}+x_i^{0,1}+x_i^{1,1},
\]
where
\[
x_i^{p,q}\in\fr{n}_{-p,-q}^*,
\qquad
dx_i^{p,q}=0.
\]
Since the splitting is real,
\[
\overline{x_i^{1,0}}=x_i^{0,1},
\qquad
\overline{x_i^{1,1}}=x_i^{1,1}.
\]

The homogeneous components of
$dy_i=x_i\wedge x_{i+1}$ satisfy
\ali{
dy_i^{2,0}
&=x_i^{1,0}\wedge x_{i+1}^{1,0},\\
dy_i^{0,2}
&=x_i^{0,1}\wedge x_{i+1}^{0,1},\\
dy_i^{1,1}
&=x_i^{1,0}\wedge x_{i+1}^{0,1}
  +x_i^{0,1}\wedge x_{i+1}^{1,0},\\
dy_i^{2,1}
&=x_i^{1,0}\wedge x_{i+1}^{1,1}
  +x_i^{1,1}\wedge x_{i+1}^{1,0},\\
dy_i^{1,2}
&=x_i^{0,1}\wedge x_{i+1}^{1,1}
  +x_i^{1,1}\wedge x_{i+1}^{0,1},\\
dy_i^{2,2}
&=x_i^{1,1}\wedge x_{i+1}^{1,1}.
}
Now that
\[
d(x_i^{1,0}\w y_i^{2,0})=d(x_i^{0,1}\w y_i^{0,2})=d(x_i^{1,1}\w y_i^{2,2})=0
\]
for $1\leq i\leq k-2$. By the condition {\bf (W)}, there exists $z_i^{3,0}\in\fr{n}_{-3,0}^*,z_i^{0,3}\in\fr{n}_{0,-3}^*$ and $z_i^{3,3}\in\fr{n}_{-3,-3}^*$ such that
\ali{
dz_i^{3,0}=x_i^{1,0}\w y_i^{2,0},\\
dz_i^{0,3}=x_i^{0,1}\w y_i^{0,2},\\
dz_i^{3,3}=x_i^{1,1}\w y_i^{2,2}.
}
This contradicts \cref{useful}. Thus we have
\[
y_i^{2,0}=y_i^{0,2}=y_i^{2,2}=0\text{ for }a\leq i\leq k-2.
\]
We next show that
\[
y_i^{2,1}=y_i^{1,2}=0.
\]
Suppose that $y_i^{2,1}\neq 0$ for some $1\leq i\leq k-2$. By the above equality $y_i^{2,0}=y_i^{0,2}=y_i^{2,2}=0$, we have
\ali{
d(x_i^{1,0}\wedge y_i^{2,1})
&=-x_i^{1,0}\wedge x_i^{1,1}
  \wedge x_{i+1}^{1,0}
=0,\\
d(x_{i+1}^{1,0}\wedge y_i^{2,1})
&=-x_{i+1}^{1,0}\wedge x_i^{1,0}
  \wedge x_{i+1}^{1,1}
=0.
}
By the condition {\bf (W)}, there exists $z_i^{3,1},z_{i+1}^{3,1}\in\fr{n}_{-3,-1}^*$ such that
\[
dz_i^{3,1}=x_i^{1,0}\w y_i^{2,1}\text{ and }dz_{i+1}^{3,1}=x_{i+1}^{1,0}\w y_i^{2,1}.
\] 
If $x_i^{1,0},x_{i+1}^{1,0}\neq 0$, this contradicts \cref{useful}. Thus $x_i^{1,0}=0$ or $x_{i+1}^{1,0}=0$. Then we have
\[
x_i=x_i^{1,1}\neq 0 \text{ or }x_{i+1}=x_{i+1}^{1,1}\neq 0.
\]
Since
\[
d(x_{i}^{1,1}\w y_i^{2,1})=-x_i^{1,1}\w x_i^{1,0}\w x_{i+1}^{1,1}=0
\]
and
\[
d(x_{i+1}^{1,1}\w y_i^{2,1})=-x_{i+1}^{1,1}\w x_i^{1,1}\w x_{i+1}^{1,0}=0,
\]
by the condition {\bf(W)}, there exists $z_i^{3,2},z_{i+1}^{3,2}\in\fr{n}_{-3,-2}^*$ such that
\[
dz_i^{3,2}=x_i^{1,1}\w y_i^{2,1}\text{ and }dz_{i+1}^{3,2}=x_{i+1}^{1,1}\w y_i^{2,1}.
\] 
This contradicts \cref{useful}. Thus $y_i^{2,1}=0$ for $1\leq i\leq k-2$.

We next show that
\[
x_i^{1,0}\neq 0
\qquad (1\leq i\leq k-1).
\]
Suppose that $x_i^{1,0}=0$ for some $1\leq i\leq k-2$.
Then
\[
x_i^{0,1}=0,\qquad x_i=x_i^{1,1}\neq 0.
\]
Since $y_i^{2,1}=0$, we have
\[
0=dy_i^{2,1}
=x_i^{1,1}\wedge x_{i+1}^{1,0}.
\]
Thus
\[
x_{i+1}^{1,0}=x_{i+1}^{0,1}=0,\qquad
x_{i+1}=x_{i+1}^{1,1}\neq 0.
\]
Since $y_i^{2,2}=0$, we have
\[
0=dy_i^{2,2}
=x_i^{1,1}\wedge x_{i+1}^{1,1}
=x_i\wedge x_{i+1}.
\]
This contradicts the linear independence of $x_i$ and $x_{i+1}$.
If $x_{k-1}^{1,0}=0$, the same argument applies to
$x_{k-2}$ and $x_{k-1}$.
Thus $x_i^{1,0}\neq 0$ for $1\leq i\leq k-1$.

By the equality $y_i^{2,0}=0$, we have
\[
x_i^{1,0}\wedge x_{i+1}^{1,0}=0
\qquad (1\leq i\leq k-2).
\]
Hence
\[
\dim H^1_{1,0}
=
\dim\ang{x_1^{1,0},\ldots,x_{k-1}^{1,0}}
=1.
\]
By conjugation,
\[
\dim H^1_{1,0}=\dim H^1_{0,1}=1.
\]
We next show that $\dim H^1_{1,1}\leq 1$.
Suppose that $x_i^{1,1}\neq 0$ and $x_{i+1}^{1,1}=0$.
Then
\[
0=dy_i^{2,1}
=x_i^{1,1}\wedge x_{i+1}^{1,0}\neq 0,
\]
which is a contradiction.
The same argument applies if
$x_i^{1,1}=0$ and $x_{i+1}^{1,1}\neq 0$.
Thus
$
x_i^{1,1}=0
$
 if and only if
$
x_{i+1}^{1,1}=0
$
for $1\leq i\leq k-2$. By the equality $y_i^{2,2}=0$, we have
\[
x_i^{1,1}\wedge x_{i+1}^{1,1}=0
\qquad (1\leq i\leq k-2).
\]
Therefore,
$
\dim H^1_{1,1}
=
\dim\ang{x_1^{1,1},\ldots,x_{k-1}^{1,1}}
=0$ or $1$. If $\dim H^1_{1,1}=1$, then we have
\[
k-1
=
\dim H^1(\fr{n}_k(\bb{R})\otimes\bb{C})
=3.
\]
Thus $k=4$. However, $\dim\fr{n}_4(\bb{R})=6$ and $\dim H^1(\fr{n}_4(\bb{R})\otimes\bb{C})=3$. This contradicts \cite[Proposition 3.2]{Shi25}.
Hence $\dim H^1_{1,1}=0$, and
\[
k-1
=
\dim H^1(\fr{n}_k(\bb{R})\otimes\bb{C})
=
\dim H^1_{1,0}+\dim H^1_{0,1}
=2.
\]
Therefore, $k=3$.
\end{proof}

\end{document}